\newtheorem{theo}{Theorem}
\newtheorem{prop}[theo]{Proposition}
\newtheorem{rem}[theo]{Remark}
\newtheorem{example}[theo]{Example}
\newtheorem{fact}[theo]{Fact}
\newtheorem{defi}{Definition}
\newcommand{\N}{\ensuremath{\mathbb{N}}} 
\newcommand{\R}{\ensuremath{\mathbb{R}}} 
\newcommand{\C}{\ensuremath{\mathbb{C}}} 
\def\id{{\rm Id}} 
\def\Hess{\mathop{\rm Hess}\nolimits} 
\def\tr{\mathop{\rm tr}\nolimits} 
\def\sp#1#2{\langle#1 \, | \, #2\rangle} 
\def\Sp#1#2{\langle\!\langle#1 \, |\,  #2\rangle\!\rangle} 
\newcommand{\upchi}{\raise1pt\hbox{$\chi$}}
\newcommand{\be}{\begin{equation}}
\newcommand{\ee}{\end{equation}}
\def\benu{\begin{enumerate}}
\def\eenu{\end{enumerate}}
\def\NN{{\rm N}}
\def\beq{\begin{equation}}
\def\eeq{\end{equation}}
\def\beqno{\begin{equation*}}
\def\eeqno{\end{equation*}}
\def\eaeq{\end{aligned}}
\def\baeq{\begin{aligned}}
\def\matd{\mathcal M_d(\C)}
\def\matrd{\mathcal M_d(\R)}
\def\matrn{\mathcal M_n(\R)}
\def\intrn{\int_{\R^n}}
\def\curv{\mathop{\rm{\bf \Theta}}\nolimits}
\def\ad{\mathop{\rm Ad}\nolimits} 
\newcommand{\tnorm}[1]{{\left\vert\kern-0.25ex\left\vert\kern-0.25ex\left\vert #1 
    \right\vert\kern-0.25ex\right\vert\kern-0.25ex\right\vert}}
\title[Matrix valued Pr\'ekopa and Brascamp-Lieb inequalities]{On matrix-valued log-concavity and related Pr\'ekopa and Brascamp-Lieb inequalities}
\author{Dario Cordero-Erausquin}
\address{Institut de Math\'ematiques de Jussieu\\
Sorbonne Universit\'e - UPMC (Paris 6)\\ France.}
\email{dario.cordero@imj-prg.fr}
\begin{document}

\maketitle

\begin{abstract}
We propose a new, self-contained,  approach to H. Raufi's extension of Pr\'ekopa's theorem for matrix-valued log-concave functions. Along the way, new related inequalities are established,  in particular a  Brascamp-Lieb variance inequality for matrix weights. 
\end{abstract}

\tableofcontents

\section{Introduction}

The present note is motivated by Raufi's paper~\cite{R} on matrix-valued log-concavity. One of our goals is to give a proof of  Raufi's extension of Pr\'ekopa's inequality that does \emph{not} use complex variables or complex geometry. 

Pr\'ekopa's theorem~\cite{P} is a fundamental result about convexity and integration that gives a functional form of the Brunn-Minkowski inequality for convex sets. It says that marginals of log-concave functions are log-concave. A function $\alpha:\R^n \to \R^+$ is log-concave if $\log(\alpha)$ is concave, that is if we can write $\alpha = e^{-\varphi}$ with $\varphi$ convex on $\R^n$. Pr\'ekopa's theorem asserts that given a log-concave function $g:\R^{n_0+n_1}\to \R^+$, the function $\alpha:\R^{n_0}\to \R^+$ defined by
$$\alpha(t):= \int_{\R^{n_1}} g(t,y) \, dy $$
is again log-concave. 
 
If we want to extend this result to functions taking their values in the cone  $\matrd^+$ of  positive operators on $\R^d$, or equivalently of $d\times d$ positive symmetric matrices, instead of $\R^+$, that is moving from $d=1$ to $d>1$, we need first to provide a notion of log-concavity for such functions. Let us mention that we can also work exactly in the same way with $\matd^+$, the cone of Hermitian positive operators. 

This is something known in the complex setting (with plurisubharmonicity in place of convexity). A simple situation in dimension $n=1$ appears when doing complex interpolation of families of Hilbert space: a function on the unit disc $g:\mathbb D\to \matd^+$ will be an interpolating family of Hilbert spaces $(\C^d, g)$ if 
$$\curv^g (z):= \partial_{\overline z} (g^{-1} \partial_z g)=0,$$ 
which is an extremal situation for the condition $g(z)\curv^g (z)\le 0$ for the symmetric matrix $g(z) \curv^g(z)$; if $d=1$ and if we write $g(z)= e^{-u(z)}$, then we have  $\curv^g (z)= -\frac14\Delta u(z) $, and so we recover that $u=-\log(g)$ is sub-harmonic.

In complex geometry, similar notions for $n,d>1$ have been investigated in connection with notions of curvature for metrics on vector bundles, see~\cite{D}. This amounts to notions of positivity for an operator $\curv^g$ constructed from the $n^2$ operators $\theta_{j,k}^g:=\partial_{\overline z_j}(g^{-1} \partial_{z_k} g)$. 

In analogy with these notions from complex geometry, Raufi~\cite{R} introduces for a smooth function $g:\R^n \to \matrd^+$ a notion of "Log-concave in the sense of Nakano". We favour the terminology  "$\NN$-log-concave" for simplicity.   We will present in details this notion and the corresponding operator $\curv^g$ in the next section. If $d=1$, it amounts to $\curv^g = \Hess(\log(g))\le 0$ as expected.  Although examples of $\NN$-log-concave functions are for the moment limited in the real world, there is (at least!) a remarkable result.

\begin{theo}[Raufi~\cite{R}]\label{prek}
Let $g:\R^{n_0+n_1}\to \matrd^+$ be a $C^2$ function with values in the symmetric positive $d\times d$ matrices. Assume that for every $t\in \R^{n_0}$, we have $\int_{\R^{n_1}} |g(t,y)|\, dy<+\infty$ where $|\cdot|$ is a norm on $\matrd$ and introduce
$$\alpha(t) := \int_{\R^{n_1}} g(t,y) \, dy \in \matrd^+.$$
If $g$ is \NN-log-concave on $\R^{n_0+n_1}$, then $\alpha$ is \NN-log-concave on $\R^{n_0}$. 
\end{theo}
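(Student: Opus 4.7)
The plan is to follow the variance/Brascamp-Lieb route to Pr\'ekopa's theorem, but carried out with matrix weights. By an affine reduction it suffices to check the Nakano condition on $\alpha$ in a single test direction; we therefore assume $n_0=1$, fix $t\in\R$ and a test vector $u_0\in\R^d$, and prove $\sp{\alpha(t)\,\curv^\alpha(t)u_0}{u_0}\le 0$. A standard regularization (convolution and truncation) makes $g$ smooth, strictly positive and fast-decaying in $y$, so that all differentiations under the integral sign are justified.

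The first step is an exact decomposition of the quantity to be estimated. Write $B(t,y):=g^{-1}\partial_t g$ and $A(t):=\alpha^{-1}\partial_t \alpha$. From $\partial_t g=gB$ and $\partial_t^2 g = g\bigl(B^2+\curv^g_{t,t}\bigr)$, differentiating $\alpha A=\int gB\,dy$ once more in $t$ and symmetrizing against the bilinear pairing $\sp{g\cdot}{\cdot}$ yields
\[
\sp{\alpha(t)\,\curv^\alpha(t)u_0}{u_0}
= \int_{\R^{n_1}}\sp{g\,\curv^g_{t,t}u_0}{u_0}\,dy
\;+\; \int_{\R^{n_1}}\sp{g\,(Bu_0-Au_0)}{Bu_0-Au_0}\,dy.
\]
The first integrand is pointwise non-positive by the Nakano assumption applied in the $t$-direction alone; the second is a matrix-weighted "variance" of $Bu_0$ under the probability measure $\alpha(t)^{-1}g(t,y)\,dy$, which is manifestly $\ge 0$. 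The whole point is to control this variance by the $y$-derivatives of $Bu_0$.

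The control is provided by the matrix-valued Brascamp-Lieb variance inequality announced in the abstract: for any smooth $w:\R^{n_1}\to\R^d$ with $\int gw\,dy=\alpha Au_0$, assuming the restricted curvature $-\curv^g_{yy}$ is positive definite, one has
\[
\int \sp{g(w-Au_0)}{w-Au_0}\,dy
\;\le\; \int \sum_{j,k=1}^{n_1}\sp{g\,\bigl[(-\curv^g_{yy})^{-1}\bigr]_{jk}\,\partial_{y_j}w}{\partial_{y_k}w}\,dy.
\]
Applied to $w=Bu_0$, one has $\partial_{y_j}w=\curv^g_{y_j,t}u_0$ (modulo commutator terms that vanish upon symmetrization in the $g$-pairing), so the right-hand side involves precisely the mixed curvatures. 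Substituting back into the decomposition produces, at each $y$, a Schur complement of the full matrix curvature form $\curv^g$ acting on test vectors supported in the $t$-direction; the Nakano assumption on $g$ on $\R^{n_0+n_1}$ asserts exactly that this Schur complement is non-positive, and the theorem follows.

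The main obstacle is the matrix Brascamp-Lieb inequality itself: the classical scalar proof relies on spectral decomposition of $\Hess\varphi$, which fails once the weight becomes non-scalar and non-commuting. One must instead set up an $L^2$-minimization directly with the matrix weight $g$ (a real analogue of H\"ormander-type $L^2$ estimates), identifying the variance with the squared norm of an orthogonal projection in a matrix-weighted Hilbert space. A secondary nuisance throughout the first step is non-commutativity: every integration by parts has to be carried out symmetrically against $\sp{g\cdot}{\cdot}$, since one cannot freely move matrix factors across the pairing.
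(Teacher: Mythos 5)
Your overall route---exact decomposition of $\curv^\alpha$ into a curvature term plus a matrix-weighted variance, control of the variance by a matrix Brascamp--Lieb inequality, and recognition of the resulting pointwise bound as a Schur-complement consequence of Nakano negativity---is exactly the paper's strategy (its Fact~\ref{comp}, Theorem~\ref{HBL} and Proposition~\ref{MA} respectively), and your identification of the matrix Brascamp--Lieb inequality as the main new obstacle, to be handled by an $L^2$-minimization in the $g$-weighted Hilbert space, is also how the paper proceeds. One small remark: the identity $\partial_{y_j}\bigl((g^{-1}\partial_t g)u_0\bigr)=\theta^g_{t,y_j}u_0$ is exact by the very definition of $\theta^g_{j,k}=\partial_{x_k}(g^{-1}\partial_{x_j}g)$, so there are no commutator terms to dispose of.

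However, your opening reduction is a genuine gap. You claim that ``by an affine reduction it suffices to check the Nakano condition on $\alpha$ in a single test direction,'' i.e.\ to take $n_0=1$ and a single vector $u_0\in\R^d$. This is false for $d>1$: Nakano log-concavity of $\alpha$ on $\R^{n_0}$ requires the quadratic form $V_0\mapsto\sp{\curv^\alpha V_0}{V_0}_{\id_{n_0}\otimes\alpha}$ to be nonpositive on \emph{all} matrices $V_0=[v_1,\dots,v_{n_0}]\in\mathcal M_{d,n_0}(\R)$, whereas restricting $\alpha$ to affine lines and testing on a single vector only probes rank-one matrices $y\otimes u_0$, which is the strictly weaker Griffiths condition. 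The paper warns explicitly that \NN-log-concavity is not a one-dimensional property and that one cannot assume $n_0=1$. Your argument as written would therefore only prove that $\alpha$ is Griffiths-log-concave. The repair is mechanical but necessary: run the same computation with a full test matrix $V_0\in\mathcal M_{d,n_0}(\R)$ and with $F:=\sum_{j=1}^{n_0}(g^{-1}\partial_{x_j}g)v_j$ in place of $Bu_0$; then $\nabla_yF=\curv^g_{0,1}V_0$, the variance term becomes the variance of $F$, and the Schur-complement step must be carried out for the block decomposition of $\curv^g$ into $\curv^g_{0,0}$, $\curv^g_{0,1}$, $\curv^g_{1,1}$ with $V_0$ a genuine $d\times n_0$ matrix. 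A secondary, lesser point: you assume $-\curv^g_{yy}$ is positive definite to invert it; the paper avoids this (and any approximation argument) by phrasing both the Schur-complement bound and the Brascamp--Lieb inequality in terms of the polar quadratic form $Q^\circ_{\id_{n_1}\otimes g,-\curv^g_{1,1}}$, which is defined without invertibility.
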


For $d=1$, it is the result of Pr\'ekopa we discussed above. The assumptions in Raufi's theorem are slightly different than the one we put.

The proof by Raufi is rather sinuous and builds upon several tools from complex analysis and complex geometry. First he complexifies the problem, using $\C^n$ in place of $\R^n$; eventually he will use only functions that depend on the real parts of the variables. He introduces some complex "manifold", which is an (infinite dimensional) hermitian vector bundle (the fibers are weighted Bergman spaces) and reduces the problem to showing that this bundle has a negative Nakano curvature, extending to higher dimensions a result by Berndtsson~\cite{bo}. He then introduces another  bundle, with the help of Fourier transform , for which he needs to establish a vector valued description of Bergman spaces with log-concave weights over tube domains through Fourier transform; this step is probably of independent interest. The new bundle is isometric to the previous one, and finally, for this bundle, he proves the desired curvature estimate using vector valued versions of H\"ormander's $\overline\partial$-estimates. This long way brings  several technical difficulties.   

The fact that some H\"ormander type $L^2$ estimate could be useful was to be expected, starting from the author's curvature computation in~\cite{CE} and from later deep generalizations by Berndtsson (for instance in~\cite{bo}).  But the rest of Raufi's proof, and in particular the use of Fourier transform, is more surprising from the point of view of convex analysis and classical approaches to Brunn-Minkowski type inequalities. 

One of our goals is to provide a different, somehow "classical" and direct approach to the problem and to Theorem~\ref{prek}, inspired by Brascamp-Lieb's approach~\cite{BL1}  to Pr\'ekopa's theorem (see also the survey~\cite{CK}), in particular without using complex variables. Namely, we  we want to:
\begin{enumerate}
\item[i)] Compute the second derivatives $\curv^\alpha$ by "direct" differentiation.
\item[ii)] Find a way to relate this "second derivative in time" (variables $t$)  to "derivatives in space" (variables $y$), using log-concavity. This reduces the problem to variance/spectral type inequality with respect to the density $g(t,y)\, dy$ on $\R^{n_1}$. 
\item[iii)] Establish and prove a general variance/spectral type inequality on $\R^{n_1}$ that applies to i)-ii) for $t\in \R^{n_0}$ fixed. 
\end{enumerate}

In the classical case  case where $d=1$ and we have a log-concave function 
$$g(t,y)=e^{-\varphi(t,y)}$$ 
where $\varphi$ is a convex function on $\R\times \R^n$ (i.e. $n_0=1,\, n_1=n$),
the first step is easy. A straightforward computation  gives, for $\alpha(t) = \int_{\R^n} e^{\varphi(t,y)}\, dy$, that 
\begin{multline}
- \alpha(t) \partial_{tt}^2 \log\alpha (t) = -\partial^2_{tt}\alpha (t) + \frac1{\alpha(t)} (\partial_t \alpha (t))^2  \\
=\int_{\R^n} \partial^2_{tt} \varphi((t,y) \,e^{-\varphi(t,y)} -\int\Big(\partial_t \varphi(t,y) - \mbox{$\frac1{\int_{\R^n}  \,e^{-\varphi(t,\cdot)}} \int_{\R^n} \partial_t \varphi(t,\cdot)
 \,e^{-\varphi(t,\cdot)}$}\Big)^2  \,e^{-\varphi(t,y)}\, dy \label{second1}
 \end{multline}
For the second step,  because  the determinant of the Hessian of $\varphi$, in all the variables $(t,y)$,  is nonnegative, we have that
\beq\label{convexity} \partial^2_{tt} \varphi \ge (\Hess_y \varphi)^{-1} \nabla_y \partial_{t}\varphi\cdot  \nabla_y \partial_{t}\varphi.
\end{equation}
This seems to require that $\Hess_y \varphi>0$, which is actually harmless by approximation. However, there is a way around this difficulty using the associated quadratic form and its polar, as we will show later (this will avoid to discuss approximation or extra assumptions). Property~\eqref{convexity} is very much related to the  homogeneous (real) Monge-Amp\`ere equation; indeed, if instead of $\ge $ we put $=$, this corresponds to a solution of the HRMA equation. Finally,  the third step $iii)$ corresponds, when $d=1$,  to the  Brascamp-Lieb(-H\"ormander) variance inequality, that can be stated as follows: given a convex function $\varphi$ on $\R^n$,
with $\int_{\R^n} e^{-\varphi}<+\infty$, then for every smooth $u\in L^2(e^{-\varphi})$ we have
\be\label{classicalBL}
\int_{\R^n}\Big( u(y) - \frac1{\int_{\R^n} e^{-\varphi}} \int_{\R^n} u e^{-\varphi} \Big)^2 \, e^{-\varphi(y)}\, dy
\le \int_{\R^n}  (\Hess_y \varphi)^{-1} \nabla u (y) \cdot \nabla u(y) \, e^{-\varphi(y)}\, dy
\ee
A direct combination of~\eqref{second1}-\eqref{convexity} and~\eqref{classicalBL} applied at fixed $t$,  to $\varphi(y)= \varphi(t, y)$ and to $u(y) = \partial_t\varphi(t, y)$ gives that $-\partial^2_{tt} \ge  0$, as wanted.

As alluded above, let us mention that both in~\eqref{convexity} and~\eqref{classicalBL}, we can replace $(\Hess_y \varphi )^{-1}(v)$, $v\in \R^n$, by
the  $\displaystyle Q_y^\circ(v) = \sup\big\{v \cdot w \, ;\; Q(w) \le 1\big\} $, the polar of quadratic form $Q_y(w):= (\Hess_y \varphi) w \cdot w$, so that everything is well defined only with the assumption that $\Hess_y \varphi$ is nonnegative.

The matrix situation where $d>1$ brings some complications and requires to establish some new properties of log-concave functions, but the principle of proof works exactly the same.  In the last Section \S4 we will explain in Fact~\ref{comp} how to organise the computation of $\curv^\alpha$ and we will establish in Proposition~\ref{MA} an analogue of~\eqref{convexity} for \NN-log-concave functions. We note that unlike the $d=1$ situation, we cannot assume that $n_0=1$ because \NN-log-concavity does not reduce to a one-dimensional property.

In  Section \S3, we will establish independently a  Brascamp-Lieb variance inequality for matrix weights, which is maybe the main new result of the present paper (and gives some justification for the operator $\curv^g$ used in the definitions below).  It relies on an $L^2$-analysis of the Laplace operator associated with a \NN-log-concave potential. It will then  be used in Section~\S4 to conclude the proof of Theorem~\ref{prek}.

\section{\NN-log-concave matrix valued functions}

We start with some  elementary notation from linear algebra. 

Let $(E, \sp{\cdot}{\cdot}, \|\cdot\|)$ be  a (finite dimensional) real Hilbert space  (later $E=\R^d$ or $E=\R^N \otimes \R^d=\mathcal M_{d,N}(\R)$ the space of $d\times N$ matrices). Given a symmetric positive operator $g$ on $E$, we denote by 
 $\sp{\cdot}{\cdot}_g$ the associated scalar product on $E$, that is 
 $$\forall u,v \in E , \qquad \sp{u}{v}_g  = \sp{g u}{v}, \qquad \|u\|_g := \sqrt{\sp{gu}{u}},$$
If we denote by $\succeq$ (resp. $\succeq_g$) the order on nonnegative operators (resp. associated with the scalar product $g$) on $E$, then we have,  for operator $C$ on $E$, 
 \begin{eqnarray*}
 \Big( C \textrm{ $g$-symmetric with }\  C \succeq_g 0 \Big)&\Longleftrightarrow  &\Big( gC \textrm{ symmetric with }\  gC \succeq 0 \Big)\\
 &\Longleftrightarrow& \sp{gCu}{v}=\sp{gu}{Cv}, \textrm{ and } \sp{gCu}{u}\ge 0, \  \ \forall u,v \in E.
 \end{eqnarray*}
 With such $C$, we can associate the nonnegative quadratic form,
 $$\forall u \in E, \qquad Q_{g, C} (u) := \sp{C u}{u}_g = \sp{g C u}{u}.$$
 The corresponding $g$-\emph{polar} quadratic form is given by
 $$ \forall v\in E, \qquad Q_{g, C} ^\circ (v):= \sup_{Q_{g,C}(u)\le 1} \sp{u}{v}_g^2. $$
 By $2$-homogeneity, the $g$-polar form can also be defined in terms of Legendre's transform, as
 \beq\label{leg}
  \frac12Q_{g, C} ^\circ (v) = \sup_{u\in E} \big\{\sp{u}{v}_g - \frac12Q_{g, C}(u)\big\}. 
  \eeq
When $C$ is moreover invertible, that is when $gC $ is a positive symmetric operator on $E$,  which means that  $Q_{g, C}$ is an Euclidean norm, then $gC^{-1}$ is a positive symmetric operator and the $g$-polar quadratic form $Q_{g, C} ^\circ$ satisfies
 $$ \forall v \in E, \qquad  Q_{g, C} ^\circ (v) = \sp{C^{-1} v}{v}_g = \sp{g C^{-1} v}{v} $$
 or equivalently, $ Q_{g, C} ^\circ = Q_{g, C^{-1}}$.
 This can be seen by diagonalizing $C$ in a $g$-orthonormal basis, for instance.
 
 In the sequel, we will denote by $\sp{\cdot}{\cdot}$ the standard scalar product on $\R^N$ for any $N\in \N^\ast$, and on $\mathcal M_{d , N}(\R)$.

On $\R^n$ and $\R^d$ we use the canonical basis, and in particular we will identify through it operators on $\R^d$ and $d\times d$ matrices. We denote by $\matrd^+$ the positive cone of  positive operators on $\R^d$, that is of $d\times d$ positive symmetric matrices. This will be the fixed range of our densities $g$. 

Let $n,d\ge 1$. Our setting is the following. We are given a  matrix-valued function 
$$g:\R^n \to \matrd^+ \subset \R^{\frac{n(n+1)}2}$$
Log-concavity of $g$ will be defined locally in terms of "second derivatives", so we will assume that $g$ is $C^2$-smooth. We introduce, for $j,k=1, \ldots , n$, and $x\in \R^n$,
\begin{eqnarray*}
\theta^g_{j,k}(x) &:=& \partial_{x_k} ( g^{-1} \partial_{x_j} g)\ \in \matrd \\
& = & g^{-1} \Big[ \partial^2_{x_k x_j} g - (\partial_{x_k} g) g^{-1} (\partial_{x_j} g)\Big] \in \matrd.
\end{eqnarray*}
Derivatives are performed on each entry of the matrix, and so the result remains indeed a matrix.  
Note that for fixed $x\in \R^d$, for $u,v\in \R^d$ and $1\le k,j\le n$, we have
\begin{eqnarray}
 \sp{\theta^g_{j,k} u}{v}_g &=& \sp{ (\partial^2_{x_k x_j} g) u}{v} - \sp{ (\partial_{x_k} g) u}{g^{-1} (\partial_{x_j} g) v} \notag\\
 &=& \sp{ (\partial^2_{x_k x_j} g) u}{v} - \sp{g^{-1} (\partial_{x_j} g) u}{g^{-1} (\partial_{x_k} g) v}_g \label{basicf}.
 \end{eqnarray}
In particular the $g$-adjoint of $\theta^g_{j,k}$ is $\theta^g _{k,j} $: 
\beq\label{sym}
 \sp{\theta^g_{j,k} u}{v}_g =\sp{ u}{\theta^g_{k,j}v}_g 
 \eeq
We have $d^2$ functions $\theta^g_{j,k} :\R^n \to \matrd$. But here and later we will often omit to write the dependence in $x$ so that  $\theta^g_{i,j} $ refers at the same time to a  $\matrd$-valued function and to an element of $\matrd$.  We next collect all these matrices and form 
$$\curv^g = [\theta^g_{j,k}]_{1\le j,k\le n}.$$
Again, we omit the dependence in $x$, so below we are really discussing $\curv^g (x)= [\theta^g_{i,j}(x)]_{i,j\le n}$ for $x\in \R^d$ fixed and omitted, and not the function $x\to \curv^g(x)$.  Note that if $d=1$, then $\curv^g $ is the Hessian $n\times n$ matrix of $\log(g)$; we shall come back to this later. 

There are several possible equivalent ways to see this operator $\curv^g$ for fixed $x\in \R^n$. By construction, we have
$$\curv^g  \in \matrn\otimes\matrd.$$

If we interpret $\curv^g$ as an element of $\mathcal M_n(\matrd)$, then we could ask that for very $Y\in \R^n$, the element $\curv^g Y \cdot Y := \sum_{i,j}^n Y_i Y_j \theta^g_{i,j}\in \matrd$ is a nonnegative operator on $(\R^d, \sp{\cdot}{\cdot}_g)$. This is a rather weak requirement, which corresponds to what is known as Griffith's curvature condition in complex geometry. 

There is a stronger and natural notion, which amounts to work with the following canonical identifications that we shall use in the rest of the paper,
$$\curv^g  \in \matrn\otimes\matrd \simeq L(\R^n \otimes \R^d) \simeq L(\mathcal M_{d,n}(\R)).$$
So we will interpret $\curv^g$ as an operator on $\R^n \otimes \R^d \simeq\mathcal M_{d,n}(\R)$. 

An element $U$ of $\R^n \otimes \R^d \simeq\mathcal M_{d,n}(\R)$ will conveniently be described in term of its columns, which are vectors of $\R^d$, 
$$ U=[u_1, \ldots , u_n], \qquad u_1, \ldots, u_n \in \R^d.$$
The action of the operator  $\curv^g$ is as follows: for $y\in \R^n$ and $u \in \R^d$,
$\curv^g(y\otimes u) = \sum_{j,k=1}^n y_j  e_k\otimes(\theta_{j,k}^g u)$,
where $(e_1, \ldots, e_n)$ is the canonical basis of $\R^n$. Equivalently, if $U=[u_1, \ldots, u_n]
\in \mathcal M_{d, n}(\R)$, then 
\beq\label{curvdef1}
\curv^g U = \Big[\sum_{j=1}^n \theta^g_{j,k} u_j\Big]_{k=1, \ldots , n} \in \mathcal M_{d,n}(\R).
\eeq

We will use the scalar product on $\R^n \otimes \R^d \simeq\mathcal M_{d,n}(\R)$ induced by the scalar product $g$ on $\R^d$ (at $x\in \R^n$ fixed), that is by the action of $g$ on the columns. It is consistent to denote it by $\sp{\cdot}{\cdot}_{\id_n\otimes g}$, if we introduce the positive symmetric operator 
$$(\id_n\otimes g) U = gU = [g u_1, \ldots , g u_n], \quad \textrm{for }\  U=[u_1, \ldots , u_n] \in \R^n \otimes \R^d \simeq\mathcal M_{d,n}(\R).$$
Namely,  for two matrices $U=[u_1, \ldots, u_n],V=[v_1, \ldots, v_n] \in \mathcal M_{d,n}(\R)$ we have 
$$\boxed{\sp{U}{V}_{\id_n\otimes g}=  \sp{(\id_n\otimes g) U}{U}=  \tr((gU)^T V) = \sum_{k=1}^n\sp{g u_k}{v_k} = \sum_{k=1}^n  \sp{u_k}{v_k}_g.}$$
If $d=1$, then for two vectors $u,v\in \R^n= \mathcal M_{1,n}(\R)$ we have $\sp{u}{v}_{\id_n\otimes g} = g(x) \sp{u}{v}$. We emphasise that in general our scalar products incorporate the weight $g(x)$ (we will later integrate in $x$). Denote also
$$\|U\|^2_{\id_n\otimes g} := \sp{U}{U}_{\id_n\otimes g}= \sum_{k=1}^n \sp{gu_k}{u_k}$$
the associated Euclidean norm. 
Since the action of $\id_n\otimes g$ on $\R^n$ is trivial, the notation $\sp{\cdot}{\cdot}_g$ would have been lighter; we favour $\sp{\cdot}{\cdot}_{\id_n\otimes g}$ not only for consistency, but also because it reminds us of the size of the matrices we are working with (later $n$ will vary). 

In view of~\eqref{sym}, we note that $\curv^g$ is a $\sp{\cdot}{\cdot}_{\id_n\otimes g}$-symmetric operator, and that for two matrices $U=[u_1, \ldots, u_n],V=[v_1, \ldots, v_n] \in \mathcal M_{d,n}(\R)$  we have
\beq\label{curvdef2}
\sp{\curv^g U}{V}_{\id_n\otimes g} =\sum_{j,k=1}^n \sp{\theta^g_{j,k} u_j}{v_k}_g = \sum_{j,k=1}^n \sp{ u_j}{\theta^g_{k,j}v_k}_g =\sp{U}{\curv^g V}_{\id_n\otimes g} .
\eeq
For later reference, let us explicitly write this quantity, in view of~\eqref{basicf}, as
\beq\label{basicf2}
\sp{\curv^g U}{V}_{\id_n\otimes g}=\sum_{j,k=1}^n  \Big[ \sp{ (\partial^2_{x_k x_j} g) u_j}{v_k} - \sp{g^{-1} (\partial_{x_j} g) u_j}{g^{-1} (\partial_{x_k} g) v_k}_g \Big]
\eeq
This can also be taken as a definition of the operator $\curv^g$. 

We shall also explain below how to interpret the operator $\curv^g$ on $(\mathcal M_{d,n}(\R), \sp{\cdot}{\cdot}_{\id_n\otimes g})$ as as $dn \times dn$ matrix, acting on $(\R^d)^n$, with the usual scalar product. 

Note that in the classical (but now confusing!) case where $d=1$ and $g\in \R^+$  (we should write $g(x)\in \R^+$ but again $x$ is fixed and omitted), we have that $\sp{a}{b}_g = gab$ for $a,b\in \R$ is therefore trivial (we just are just multiplying by the weight $g(x)>0 $ which is irrelevant as far as positivity is concerned) and that $\curv^g= \Hess (\log(g)) \in L(\R^n)$. So let us emphasize that:
\beq\label{dim1}
\textrm{if $d=1$ and $g=e^{-\varphi}$, then $\curv^g = -\Hess \varphi \in L(\R^n)$, }
\eeq
and for $u\in \R^n = \mathcal M_{1,n}(\R)$, 
$$\sp{\curv^g u}{u}_{\id_n\otimes g} = \sp{\curv^g u}{u}_{g \id_n} = \big[-(\Hess \varphi)u\cdot u \big]\, e^{-\varphi} .$$
The minus sign is a bit unfortunate from the point of view of convex analysis, but it is too well entrenched in complex geometry to change it. So throughout the paper we will have to consider $-\curv^g$ to get nonnegative operators.

The next notion is called "Log-concave in the sense of Nakano" by Raufi~\cite{R}, by analogy with the situation in complex geometry. It is a possible extension to $d>1$ of the situation we just described, and also of similar notions we encounter in complex analysis (for instance when doing complex interpolation of Hilbert spaces). We will favour the terminology  "$\NN$-log-concave" for simplicity.  
\begin{defi}[\NN-log-concavity]
Let $g:\R^n \to \matrd^+$ be a $C^2$ function with values in the symmetric positive matrices. We form at every $x$ the operator $\curv^g$ as before. We say that $g$ is \emph{\NN-log-concave} if at every $x\in \R^n$, $\curv^g$ is a nonpositive symmetric operator on $(\R^n\otimes \R^d, \sp{\cdot}{\cdot}_{\id_n\otimes g})= (M_{d,n}(\R) , \sp{\cdot}{\cdot}_{\id_n\otimes g})$, that is,  if
\be\label{nak}
\sp{\curv^g U}{U}_{\id_n\otimes g} \le 0, \qquad \textrm{for every matrix $U\in \mathcal M_{d,n}(\R)$}.
\ee
\end{defi}

Let us rewrite the previous condition in terms of the canonical structure. If we introduce 
$$\widetilde\curv^g := [\tilde \theta_{j,k}^g]_{j,k\le n}, \quad \textrm{where }\ \tilde \theta_{j,k}^g := g \theta^g_{j,k} = 
\partial^2_{x_k x_j} g - (\partial_{x_k} g) g^{-1} (\partial_{x_j} g)\in \matrd ,$$
that we see as an operator on $(\R^{d})^n= \R^{d n}$, that is as an  $(d n \times dn)$ matrix,  acting on $X=(u_1, \ldots , u_d) \in  (\R^{d})^n$ as
$$\widetilde\curv^g X = \Big( \sum_{i=1}^n \tilde \theta_{i,j}^g  u_j\Big)_{i=1, \ldots , n} \in  (\R^{d})^n,$$
then condition~\eqref{nak} is equivalent to the requirement that
$$-\, \widetilde\curv^g  \succeq 0$$
in the usual sense, as a symmetric $(d n \times dn)$ matrix.

We defined \NN-log-concavity on the whole $\R^n$, but actually if $g$ is defined only on a domain $\Omega\subset \R^n$, that is if $g:\Omega \to \matrd^+$ is a $C^2$ function on $\Omega$, we can say that $g$ is \NN-log-concave on $\Omega$ if $\curv^g(x)$ is a nonpositive operator at every $x\in \Omega$. 

If we impose~\eqref{nak} only on rank one matrices $y\otimes u$, then we arrive to the weaker notion of log-concavity "in the sense of Griffith" that we mentioned above. Note that if $n=1$ (or $d=1$, but we want to work with matrices) these two notions coincide. But if $n,d>1$, they differ (see~\cite{R}), and unlike the case of Griffith log-concavity, it is not sufficient to be \NN-log-concave on any one-dimensional affine line of $\R^n$ to be \NN-log-concave on $\R^n$.

An obvious observation that will be used several times is that given a $\NN$-log-concave function $g:\R^{n_0+n_1}\to \matrd^+$, if we freeze some coordinates $t\in \R^{n_0}$, then the function $y\to g(t,y)$ remains an $\NN$-log-concave function from $\R^{n_1}$ to $\matrd^+$. We stress that although the dimension of the variables might vary, the dimension $d$  will remain fixed throughout the paper. 

Let us also mention that if $P\in \mathcal O_d(\R)$ is a a fixed  isometry (for the usual Euclidean structure) of $\R^d$, then the function 
 $$\tilde g := P^{-1} g P :\R^n \to \matrd^+$$
 will be such that for $u,v\in \R^d$ and $1\le k,j\le n$, at any fixed $x\in \R^n$,
 $$  \sp{\theta^{\tilde g}_{j,k} u}{v}_{\tilde g}= \sp{\theta^g_{j,k} Pu}{Pv}_g.$$ 
So $\tilde g$ is log-concave if and only if $g$ is. 

Raufi discusses in his paper several other properties of $\NN$-log-concave functions, and abstract ways to construct such functions. It is not easy though to give explicit non-trivial examples of \NN-log concave functions for $n,d \ge 2$. 

\begin{example}
Let $n=d=2$. We will define a function $g:\Omega\to \mathcal M_2(\R^2)^+ \subset \R^3$ that is \NN-log-concave in $\Omega\subset \R^2$ a neighbourhood of zero. Let $s\ge 0$ be a paramater to be fixed later. For $x=(x_1, x_2) \in \R^2$ define
$$g(x)=g(x_2,x_2) = {\rm Id}_2 - 
\begin{pmatrix}
 s  x_1^2+x_2^2 & x_1 x_2 \\
 x_1 x_2 & s x_1^2+x_2^2  \\
\end{pmatrix}
= \begin{pmatrix}
 1-s  x_1^2-x_2^2 & -x_1 x_2 \\
 -x_1 x_2 &1 -s x_1^2-x_2^2  \\
\end{pmatrix}.
$$
Note that $g$ has indeed values in $\mathcal M_2(\R)^+$ when $(x_1, x_2)$ is close to zero. For $s=0$, this corresponds to the function $g(x)= {\rm Id}_2 - x^T \, x$; when $d=1$ this is the function $1-x^2$,  which is concave and therefore log-concave 
on $\{|x|<1\}$. This unfortunately does not work when $d>1$, as we will see. 

Computation of $\widetilde\curv^g $ at $x=0$ gives
$$\widetilde\curv^g(0) = \begin{pmatrix}
 -2 s & 0 & 0 & -1 \\
 0 & -2 & -1 & 0 \\
 0 & -1 & -2 & 0 \\
 -1 & 0 & 0 & -2 s 
 \end{pmatrix}
$$
whose spectrum is
$$\{-3, -1, -1 - 2 s, 1 - 2 s\}$$
 which belongs to $(-\infty, 0]$ exactly when $s\ge \frac12$. If we fix $s> \frac12$, then the spectrum of $\widetilde \curv^g(x)$ will remain in $(-\infty, 0)$ for $x$ small enough, by continuity,  and $g$ will be \NN-log-concave. 

 \end{example}

Finally, let us mention that all what we say remains true for functions with values in the Hermitian positive $d\times $d matrices, after obvious adaptions. 

\begin{rem}[Hermitian valued matrices]\label{complex}
It is possible to define, in a complete analogous way, \NN-log-concavity for a function
$$g:\R^n \to \matd^+$$
with values in  the set of  Hermitian positive $d\times d$ matrices. At fixed $x$, $g$ is now a Hermitian product on $\C^d$, denoted again by $\sp{\cdot}{\cdot}_g$ and it induces also an hermitian product $\sp{\cdot}{\cdot}_{\id_n\otimes g}$ on $\mathcal M_{d,n}(\C)$. It is maybe not a good idea to insist too much on the fact that $\R^n\otimes \C^d = \C^n\otimes \C^d \simeq  \mathcal M_{d,n}(\C)$: we have fixed an orthonormal basis on $\R^n$, so complexification should be transparent (as is the complexification of $\id_n$). We can then define $\curv^g$ by the same formulas; it is again a $\sp{\cdot}{\cdot}_{\id_n\otimes g}$ hermitian operator on $\mathcal M_{d,n}(\C)$, and \NN-log-concavity is the requirement that it is a nonpositive one. 

Theorem~\ref{prek} was proved by Raufi in in this setting, actually.

This is obviously more general than the case we considered.
We have chosen to work with real symmetric matrices for notational reasons only (because we prefer $\R^n\otimes \R^d$ to  $\R^n\otimes \C^d$), but all our arguments work of course in the positive hermitian case as well. 
\end{rem}


\section{Brascamp-Lieb variance inequality for matrix-valued \NN-log-concave weights}

Given a function $g:\R^n \to \matrd^+$, we denote by $L^2(g)$ the space of Borel functions $F:\R^n\to \R^d$ for which the quantity
$$\intrn \|F\|_g^2 = \intrn \|F(x)\|_{g(x)}^2 = \int \sp{g(x) F(x) }{F(x)} \, dx $$ 
is finite. It is a Hilbert space over functions from $\R^n$ to $\R^d$ with scalar product $\intrn \sp{F}{H}_g $. If $d=1$ and $g=e^{-\varphi}$, it is the usual weighted  space $L^2(e^{-\varphi})$. Note that if $F\in L^2(g)$ and $\int |g|<+\infty$ (where $|\cdot|$ is a norm on the matrices, for  instance, the operator norm), then the vector 
$$\intrn g F = \intrn g(x) F(x)\, dx \in \R^d$$
 is well defined since $\intrn \|gF \| \le \int \sqrt{|g|}\; \|\sqrt g F\| \le \sqrt{\intrn |g| \intrn \|F\|_g^2} <+\infty$. 

Given a differentiable function $F:\R^n \to \R^d$ we write, at $x\in \R^n$,
$$\nabla_x F=\nabla_x F (x) = [\partial_{x_1} F(x), \ldots, \partial_{x_n}F(x) ] \in \mathcal M_{d,n}(\R).$$

The next result is a generalization of the  Brascamp-Lieb variance inequality~\cite{BL1} (anticipated in the complex setting by H\"ormander~\cite{hormander})  to matrix-valued potentials and vector valued functions. 

\begin{theo}[Brascamp-Lieb variance inequality for matrix \NN-log-concave weights]\label{HBL}
Let $g:\R^n \to \matrd^+$ be a \NN-log-concave function. Assume that $\intrn |g| <+\infty$ and set $\mathcal Z := \int_{\R^n} g\in \matrd^+$.

For any vector valued $C^1$ function $F:\R^n \to \R^d$ belonging to $L^2(g)$ we have
$$\intrn \|F(x)- \mathcal Z^{-1}\mbox{$\intrn$}g F \, \|^2_{g(x) }\, dx \le \intrn Q_{\id_n\otimes g,-\curv^g}^\circ(\nabla_x F)\, dx,$$
where $Q_{\id_n\otimes g,-\curv^g}^\circ$ is, at fixed $x\in \R^n$, the $\sp{\cdot}{\cdot}_{\id_n\otimes g(x)}$-polar of the quadratic form $\mathcal U\to -\sp{\curv^g(x) U}{U}_{\id_n\otimes g(x)} $ on $M_{d,n}(\R)$ associated to $-\curv^g(x)$. If $\curv^g$ is almost everywhere invertible as an operator on $\mathcal M_{d,n}(\R)$, then we can also write
$$ \intrn \|F(x)- \mathcal Z^{-1}\mbox{$\intrn$}g F \, \|^2_{g(x) }\, dx \le \intrn\sp{(-\curv^g(x))^{-1}  \nabla_x  F}{\nabla_x F}_{\id_n\otimes g(x)}\, dx.$$
\end{theo}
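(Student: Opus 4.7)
My plan is to mirror the H\"ormander--Brascamp--Lieb $L^{2}$-proof of the scalar inequality~\eqref{classicalBL}, substituting the Witten Laplacian by its matrix-weighted analogue and letting $\curv^{g}$ emerge from a Bochner-type identity. For vector-valued $u:\R^{n}\to\R^{d}$, define
$$
Lu := \Delta u + g^{-1}\bigl(\nabla g\cdot\nabla u\bigr) = g^{-1}\sum_{k=1}^{n}\partial_{x_{k}}\bigl(g\,\partial_{x_{k}}u\bigr).
$$
For $u,v$ compactly supported, a direct integration by parts gives
$$
\int_{\R^{n}}\sp{Lu}{v}_{g}\,dx = -\int_{\R^{n}}\sp{\nabla u}{\nabla v}_{\id_{n}\otimes g}\,dx,
$$
so $L$ is self-adjoint on $L^{2}(g)$, and $Lu=0$ forces $\nabla u=0$, hence $\ker L = \{\text{constants}\}$.

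The key step is the Bochner-type identity
$$
\int_{\R^{n}}\|Lu\|_{g}^{2}\,dx = \int_{\R^{n}}\sum_{j,k=1}^{n}\|\partial^{2}_{x_{j}x_{k}}u\|_{g}^{2}\,dx - \int_{\R^{n}}\sp{\curv^{g}\nabla u}{\nabla u}_{\id_{n}\otimes g}\,dx. \qquad(\star)
$$
I would prove it by rewriting $\int\|Lu\|_{g}^{2}=-\int\sp{\nabla u}{\nabla(Lu)}_{\id_{n}\otimes g}$ and expanding $\nabla(Lu)$ via the commutator relation
$$
\partial_{x_{k}}(Lu)-L(\partial_{x_{k}}u) = \sum_{j=1}^{n}\theta^{g}_{j,k}\,\partial_{x_{j}}u,
$$
which is immediate from the very definition $\theta^{g}_{j,k}=\partial_{x_{k}}(g^{-1}\partial_{x_{j}}g)$: one further IBP applied to each $\int\sp{\partial_{x_{k}}u}{L(\partial_{x_{k}}u)}_{g}$ produces the Hessian-squared term, while~\eqref{curvdef2} assembles the commutator contributions into the $\curv^{g}$-term. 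Combined with \NN-log-concavity~\eqref{nak}, $(\star)$ delivers the a priori estimate
$$
\int_{\R^{n}}Q_{\id_{n}\otimes g,\,-\curv^{g}}(\nabla u)\,dx \le \int_{\R^{n}}\|Lu\|_{g}^{2}\,dx. \qquad(\star\star)
$$

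To conclude, set $\tilde F := F - \mathcal Z^{-1}\int gF$, so $\int g\tilde F = 0$, and solve $-Lu=\tilde F$ by Lax--Milgram applied to the Dirichlet form $\int\sp{\nabla u}{\nabla v}_{\id_{n}\otimes g}$ on the weighted $H^{1}$ space modulo constants, which is legitimate since $\tilde F\in(\ker L)^{\perp}=\overline{\mathrm{Range}(L)}$. Integration by parts, followed by the pointwise Cauchy--Schwarz $\sp{A}{B}_{\id_{n}\otimes g}\le\sqrt{Q_{\id_{n}\otimes g,-\curv^{g}}(A)\,Q^{\circ}_{\id_{n}\otimes g,-\curv^{g}}(B)}$ (a direct consequence of Legendre's formula~\eqref{leg}) and the scalar Cauchy--Schwarz, gives
$$
\int\|\tilde F\|_{g}^{2}\,dx = \int\sp{\nabla u}{\nabla F}_{\id_{n}\otimes g}\,dx \le \sqrt{\int Q_{\id_{n}\otimes g,-\curv^{g}}(\nabla u)\,dx}\,\sqrt{\int Q^{\circ}_{\id_{n}\otimes g,-\curv^{g}}(\nabla F)\,dx};
$$
applying $(\star\star)$ with $-Lu=\tilde F$ bounds the first factor on the right by $\sqrt{\int\|\tilde F\|_{g}^{2}\,dx}$, which cancels and yields the desired inequality. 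The invertible case is then just the rewriting $Q^{\circ}_{\id_{n}\otimes g,-\curv^{g}}(V)=\sp{(-\curv^{g})^{-1}V}{V}_{\id_{n}\otimes g}$ recorded in \S2.

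The main anticipated obstacle is the Lax--Milgram solvability step, because $(\star\star)$ controls the Dirichlet form by $\|Lu\|_{g}$ only through $-\curv^{g}$, which may degenerate. I would handle this by perturbation or approximation (for instance by restricting to a bounded Dirichlet problem, or perturbing $g$ to make $-\curv^{g}$ uniformly strictly positive), passing to the limit at the end; insisting on the polar form $Q^{\circ}$ rather than $(-\curv^{g})^{-1}$ throughout makes this passage automatic by monotonicity of the polar in its defining quadratic form. Secondary technical matters (justifying IBP when $F$ is merely $C^{1}$ with $F,\nabla F\in L^{2}(g)$) are handled by the usual mollification and truncation.
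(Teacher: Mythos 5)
Your outline reproduces the paper's proof in all of its structural elements: the weighted Laplacian $L$, the integration-by-parts formula, the Bochner identity $(\star)$ (the paper's Fact~\ref{gamma2}, obtained by exactly the commutator computation you describe), and the Legendre/polar duality that converts the a priori estimate $(\star\star)$ into the variance bound. The one step that fails as written is the solvability of $-Lu=\tilde F$. Lax--Milgram for the Dirichlet form $\int\sp{\nabla u}{\nabla v}_{\id_n\otimes g}$ modulo constants requires the right-hand-side functional $v\mapsto\int\sp{\tilde F}{v}_g$ to be continuous for the energy norm, i.e.\ a Poincar\'e inequality for the weight $g$; this is not available a priori, is essentially what the theorem is trying to establish, and does not follow from $(\star\star)$ when $-\curv^g$ degenerates. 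The identity $(\ker L)^\perp=\overline{\mathrm{Range}(L)}$ that you invoke yields only \emph{approximate} solutions, not an exact one. Moreover your identification $\ker L=\{\mathrm{constants}\}$ is justified only for compactly supported $u$: a weak solution of $Lu=0$ in $L^2(g)$ need not have compact support, and one needs elliptic regularity for the non-diagonal system $L_0H=0$ (note the first-order matrix coefficient $B$, absent when $d=1$) together with a cutoff argument to conclude $\nabla u=0$. This is precisely the content, and the proof, of the paper's Fact~\ref{density}. Your suggested fixes (perturbing $g$ so that $-\curv^g$ is uniformly positive, or passing to a bounded Dirichlet problem) do not by themselves restore continuity of the right-hand-side functional, so they would not rescue the Lax--Milgram step.

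The good news is that your chain of inequalities is robust under replacing the exact solution by approximate ones. If $\Psi_k$ are $C^2$ compactly supported with $L\Psi_k\to\tilde F$ in $L^2(g)$, then $\int\|\tilde F\|_g^2=\lim_k\int\sp{L\Psi_k}{\tilde F}_g=-\lim_k\int\sp{\nabla\Psi_k}{\nabla F}_{\id_n\otimes g}$, and your pointwise Cauchy--Schwarz for the polar pair together with $(\star\star)$ gives $\int\|\tilde F\|_g^2\le\big(\int\|\tilde F\|_g^2\big)^{1/2}\big(\int Q^{\circ}_{\id_n\otimes g,-\curv^g}(\nabla F)\big)^{1/2}$ in the limit, which closes the argument. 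So the correct repair is to replace ``solve $-Lu=\tilde F$'' by the density of $\{L\Psi:\Psi\in C^2 \text{ compactly supported}\}$ in $\{F\in L^2(g):\int_{\R^n} gF=0\}$, and to actually prove that density via the regularity-plus-cutoff argument above. This is exactly the paper's route, which packages the approximation as the identity $\int\|H\|_g^2=2\int\sp{H}{L\Psi}_g-\int\|L\Psi\|_g^2+\int\|H-L\Psi\|_g^2$ followed by an infimum over $\Psi$; aside from this packaging, the two proofs are the same.
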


When $d=1$, we recover the classical Brascamp-Lieb variance inequality~\eqref{classicalBL}. In some sense, the previous theorem gives some extra justification of the relevance of the operator $-\curv^g$ and to its non-negativity (and therefore to the associated notion of \NN-log-concavity).

\begin{rem}[Hermitian valued weights]
If $g:\R^n \to \matd^+$ is a $C^2$ function with values in the positive hermitian $d\times d$ matrices that is \NN-log-concave
as discussed in Remark~\ref{complex}, then the previous Theorem also applies with the obvious adaptations; that is,  the inequalities will hold for any
$C^1$ function $F:\R^n \to \C^d$ belonging to $L^2(g)$.

Indeed, the proof below is completely identical in this (more general) case. 
\end{rem}

The term on the left in the inequalities of Theorem~\ref{HBL} is a variance, that is the square of the $L^2$ norm of the orthogonal projection onto the orthogonal of constant functions, and we also have, using linearity of integration, 
\begin{eqnarray*}
 \intrn \|F(x)- \mathcal Z^{-1}\mbox{$\int$}g F\,  \|^2_{g(x) }\, dx
 &=& \intrn \sp{g(x) F(x) - g(x) \mathcal Z^{-1} \mbox{$\int$}g F }{F(x) -  \mathcal Z^{-1} \mbox{$\int$}g F }\\
 & = &  \intrn \sp{g F}{F}\, dx - 2 \intrn \sp{g(x) F(x) }{\mathcal Z^{-1}\mbox{$\int$}g F }\, dx \\
 & & \qquad + \intrn  \sp{g(x)\mathcal Z^{-1}\mbox{$\int$}g F }{\mathcal Z^{-1}\mbox{$\int$}g F }\, dx \\
 &= &   \intrn \sp{g F}{F}\, dx - \sp{\mbox{$\int$}g F }{\mathcal Z^{-1}\mbox{$\int$}g F } \\
&=& \int_{\R^n} \|F\|_g^2 \, dx - \Big\|\mathcal Z^{-1}\mbox{$\intrn$}\, g F\, dx\Big\|_{\mathcal Z}^2. 
\end{eqnarray*}

\begin{rem}[$\matrd$-operator form]
It is possible to restate the operator inequality for functions with values on $ \mathcal M_{d}(\R)$. N
Given a smooth function
$$A:\R^n \to \matrd$$
 we denote $\nabla A \in \R^n \otimes \R^d\otimes \R^d \simeq L(\R^d, \mathcal M_{d,n}(\R))$ the operator defined, at $x\in \R^n$,  by
 $$\forall a_0 \in \R^d, \qquad  (\nabla_x A) a_0 :=  \nabla_x \big(A(x) a_0\big) \in \mathcal M_{d,n}(\R).$$
 At fixed $x\in \R^n$, this is an operator between Hilbert spaces that admits an adjoint, that we some abuse of notation we denote simply by $\ad_g(\nabla_x A)\in L(\mathcal M_{d,n}(\R), \R^d)$,
 $$ \forall a_0\in \R^d, \forall U\in \mathcal M_{d , n} (\R), \quad \sp{\nabla_x A a_0}{U}_{\id_n\otimes g(x)} = \sp{a_0}{ \ad_{g(x)}(\nabla_x A) U}_{g(x)}.$$
We will also denote by $\ad_g$ the adjoint with respect to $g$ for operators on $\R^d$, that is, for $A\in \matrd$, 
$$\forall a_0, b_0 \in \R^d, \quad \sp{A a_0}{b_0}_{g(x)}=\sp{a_0}{\ad_{g(x)}(A) b_0}_{g(x)},$$
or equivalently, $\ad_g(A) = g^{-1} A^T g$. 
Then the Brascamp-Lieb inequality is equivalent to the following operator valued inequality with respect to the matrix weight $g$. For any smooth function $A:\R^n \to \matrd $, we have the following inequality between symmetric $d\times d$ matrices
\begin{multline*}
\int_{\R^n} g(x) \ad_{g(x)}\big[A(x) -\mathcal Z^{-1}\mbox{$\intrn$g A} \big]\big(A(x) -\mathcal Z^{-1}\mbox{$\intrn$g A} \big) \, dx \\
\succeq
\int_{\R^n} g(x) \ad_{g(x)}\big[\nabla_x A\big] (-\curv^g(x))^{-1}  \nabla_x A \, dx 
\end{multline*}
We pass from one form to the other by testing this matrix inequality on a fixed vector $a_0$ and considering $F= A a_0 : \R^n \to \R^d$. 

\end{rem}

\begin{proof}
Introduce the differential operator $L$ given for $F :\R^n \to \R^d$ of class $C^2$ by
$$L F := \Delta F - \sum_{k=1}^n (g^{-1}\partial_{x_k} g ) \, \partial_{x_k} F $$
which means, coordinate-wise, for $F=(F_1, \ldots, F_d)$, with $F_\ell : \R^n \to \R$, and $x\in \R^n$, 
$$\forall \ell\le d, \qquad  (LF(x))_\ell = \Delta_x F_\ell (x)  + \sum_{k=1}^n\sum_{r=1}^d \big(g(x)^{-1} \partial_{x_k} g(x)\big)_{\ell, r}  \partial_{x_k} F_r(x) .$$

\begin{fact}\label{ipp}
If $F,G :\R^n\to \R^d$ are $C^2$ and compactly supported, then 
$$\int_{\R^n} \sp{LF(x)}{G(x)}_{g(x)}\, dx = - \int_{\R^n} \sp{\nabla_x F(x) }{\nabla_x G(x)}_{\id_n\otimes g(x)} \, dx =-\sum_{k=1}^n \int_{\R^n} \sp{\partial_{x_k} F}{\partial_{x_k} G}_{g(x)}\, dx.$$
\end{fact}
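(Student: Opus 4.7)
The plan is to verify the second equality first, which is essentially bookkeeping, and then obtain the main identity by componentwise integration by parts in each coordinate direction.

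For the second equality, I would simply invoke the definition of the scalar product $\sp{\cdot}{\cdot}_{\id_n\otimes g}$ recorded in the boxed display earlier: writing $\nabla_x F = [\partial_{x_1} F, \ldots, \partial_{x_n}F]$ and similarly for $G$ as elements of $\mathcal M_{d,n}(\R)$, one has at each $x$
$$\sp{\nabla_x F}{\nabla_x G}_{\id_n\otimes g(x)} = \sum_{k=1}^n \sp{\partial_{x_k} F}{\partial_{x_k} G}_{g(x)},$$
which after integration gives the rightmost quantity in the Fact.

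For the main identity, I would start from the right-hand side. Using $\sp{\partial_{x_k} F}{\partial_{x_k} G}_g = \sp{g \,\partial_{x_k} F}{\partial_{x_k} G}$ with the standard Euclidean scalar product on $\R^d$, and recalling that $F$, $G$ are compactly supported (so no boundary terms appear), I integrate by parts in the variable $x_k$:
$$\int_{\R^n}\sp{g\, \partial_{x_k} F}{\partial_{x_k} G}\, dx = -\int_{\R^n} \sp{\partial_{x_k}(g\,\partial_{x_k} F)}{G}\, dx = -\int_{\R^n} \sp{(\partial_{x_k} g) \partial_{x_k} F + g\,\partial^2_{x_k x_k} F}{G}\, dx.$$
Here I use that $g$ is symmetric, so $g$ moves across the Euclidean pairing freely. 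Summing over $k$ and factoring $g$ to reintroduce the $g$-inner product gives
$$\sum_{k=1}^n\int_{\R^n} \sp{\partial_{x_k} F}{\partial_{x_k} G}_g\, dx = -\int_{\R^n} \sp{\Delta F}{G}_g\, dx - \sum_{k=1}^n\int_{\R^n} \sp{g^{-1}(\partial_{x_k} g)\partial_{x_k} F}{G}_g\, dx,$$
where I used $\sp{(\partial_{x_k} g)\partial_{x_k} F}{G} = \sp{g^{-1}(\partial_{x_k} g)\partial_{x_k} F}{G}_g$. The right-hand side is precisely $-\int \sp{LF}{G}_g\, dx$ by the definition of $L$, which closes the computation.

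There is no real obstacle here; the only conceptual point is that $L$ has been constructed exactly so that its drift term $-\sum_k g^{-1}(\partial_{x_k} g)\partial_{x_k}$ cancels against the derivative of the weight $g$ produced when one integrates by parts in the weighted inner product. In other words, $L$ is the symmetric diffusion generator on $L^2(g)$ with carré du champ $\Gamma(F,G) = \sp{\nabla_x F}{\nabla_x G}_{\id_n\otimes g}$, and the Fact is just the associated Dirichlet-form identity. Compact support is only used to justify discarding boundary terms in each $x_k$-integration; $C^2$ regularity ensures that all derivatives appearing in the computation are continuous and classical.
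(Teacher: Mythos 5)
Your proof is correct and follows essentially the same route as the paper's: componentwise integration by parts in each $x_k$, with the derivative of the weight $g$ absorbed into the drift term of $L$; you merely run the computation from the right-hand side to the left instead of the other way around. One small remark: your last step tacitly takes $L=\Delta+\sum_k (g^{-1}\partial_{x_k}g)\,\partial_{x_k}$, which agrees with the paper's coordinatewise formula for $L$ (and is the sign convention under which the Fact is true), rather than with the minus sign appearing in the paper's displayed definition of $L$.
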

\begin{proof}[Proof of the Fact]
We have
$$ \int \sp{g(x)\Delta F(x)}{G(x)} = \sum_{k=1}^n  \int \sp{g(x)\partial^2_{x_k x_k} F(x)}{G(x)} \,dx = \sum_{k=1}^n\sum_{\ell,r=1}^d  \int (g(x))_{\ell,r}\partial^2_{x_k x_k} F_r(x) G_\ell(x) \,dx.$$
Integration by parts gives
$$\int_{\R^n} (g(x))_{\ell,r}\partial^2_{x_k x_k} F_r(x) G_\ell(x) \,dx = -\int (\partial_{x_k}g(x))_{\ell,r}\partial_{x_k} F_r(x) G_\ell(x) \,dx - \int_{\R^n}(g(x))_{\ell,r}\partial_{x_k} F_r(x) \partial_{x_k}G_\ell(x) \,dx,$$
and therefore
\begin{eqnarray*} \int_{\R^n} \sp{g(x)\Delta F(x)}{G(x)} &=&  -\sum_{k=1}^n\int_{\R^n} \sp{\partial_{x_k}g \partial_{x_k} F}{G} - \sum_{k=1}^n \int_{\R^d} \sp{g\partial_{x_k} F}{\partial_{x_k} G}\\
& =&-\sum_{k=1}^n\int_{\R^n} \sp{g^{-1}\partial_{x_k}g \partial_{x_k} F}{G}_g \;  - \int_{\R^n}\sp{\nabla F}{\nabla G}_{\id_n\otimes g},
\end{eqnarray*}
as claimed. 
\end{proof}

The next useful observation is a Bochner type integration by parts formula which explicitates the connection between $L$ and the curvature operator $\curv^g$. 
\begin{fact}\label{gamma2}
If $F :\R^n\to \R^d$ is a $C^2$compactly supported function, then 
$$\int_{\R^n} \|LF(x)\|^2_{g(x)}\, dx =  \int_{\R^n} \sp{-\curv^g_x \nabla_x F(x) }{\nabla_x F(x)}_{\id_n\otimes g(x)} \, dx + \int_{\R^n}\sum_{j,k=1}^n \|\partial^2_{x_j x_k}F\|_{g(x)}^2\, dx.$$
\end{fact}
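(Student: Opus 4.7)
The plan is to recognise Fact~\ref{gamma2} as a matrix-valued Bochner / $\Gamma_2$ identity, in which the operator $\curv^g$ arises precisely as the commutator between $L$ and the space derivatives. First I would apply Fact~\ref{ipp} to the pair $(F,LF)$, i.e.\ take $G=LF$, which rewrites
$$\int_{\R^n}\|LF\|_g^2\, dx \;=\; -\int_{\R^n} \sp{\nabla_x F}{\nabla_x(LF)}_{\id_n\otimes g}\, dx \;=\; -\sum_{k=1}^n\int_{\R^n}\sp{\partial_{x_k}F}{\partial_{x_k}(LF)}_g\, dx.$$
The problem then reduces to computing $\partial_{x_k}(LF)$ and reorganising.

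Direct differentiation of the formula for $L$, using the definition $\theta^g_{j,k}=\partial_{x_k}(g^{-1}\partial_{x_j}g)$ that is built into $\curv^g$, produces a commutator identity of the form
$$\partial_{x_k}(LF) \;=\; L(\partial_{x_k}F)\;+\;\sum_{j=1}^n \theta^g_{j,k}\,\partial_{x_j}F,$$
with the sign determined by the convention in the definition of $L$ that makes Fact~\ref{ipp} hold. Plugging this into the previous display, the term $L(\partial_{x_k}F)$ is handled by a second application of Fact~\ref{ipp}, now with both entries equal to $\partial_{x_k}F$; this gives $-\sum_j\int\|\partial^2_{x_j x_k}F\|_g^2\, dx$, and after summing in $k$ one obtains the full Hessian term $\sum_{j,k}\int\|\partial^2_{x_j x_k}F\|_g^2\, dx$ on the right-hand side of Fact~\ref{gamma2}.

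The remaining contribution, coming from the commutator, is the double sum
$$-\sum_{j,k=1}^n\int_{\R^n}\sp{\partial_{x_k}F}{\theta^g_{j,k}\,\partial_{x_j}F}_g\, dx,$$
which by the formulas~\eqref{curvdef1}--\eqref{curvdef2} defining the action of $\curv^g$ on $\nabla_x F\in \mathcal M_{d,n}(\R)$ is exactly $\int_{\R^n}\sp{-\curv^g\nabla_x F}{\nabla_x F}_{\id_n\otimes g}\, dx$. Adding the two pieces gives Fact~\ref{gamma2}.

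The only real obstacle is sign and index bookkeeping. One must derive the commutator identity with the correct sign against the definition of $L$ (doing so also reconfirms Fact~\ref{ipp} and will ensure a nonnegative right-hand side when $g$ is \NN-log-concave), and then convert the raw bilinear sum over $j,k$ into the symmetric quadratic form $\sp{\curv^g\cdot}{\cdot}_{\id_n\otimes g}$ using the $g$-adjoint relation $\theta^g_{k,j}=\ad_g \theta^g_{j,k}$ of~\eqref{sym}. Integrability of every intermediate term is automatic under the compact support assumption on $F$, so no cutoff or approximation step is needed.
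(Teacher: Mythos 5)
Your proposal is correct and follows essentially the same route as the paper's own proof: apply Fact~\ref{ipp} with $G=LF$, derive the commutation identity $\partial_{x_j}(LF)=L(\partial_{x_j}F)+\sum_k\theta^g_{j,k}\partial_{x_k}F$, integrate by parts once more on the $L(\partial_{x_j}F)$ terms to produce the Hessian contribution, and identify the commutator sum with $\sp{\curv^g\nabla F}{\nabla F}_{\id_n\otimes g}$ via~\eqref{curvdef2}. Your closing remarks on sign bookkeeping and on integrability under compact support are exactly the points the paper also relies on, so nothing is missing.
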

\begin{proof}[Proof of the Fact]
Using the previous Fact we can write
$$\int_{\R^n} \sp{LF(x)}{LF(x)}_{g(x)}\, dx =  -\int_{\R^n} \sp{\nabla_x F(x) }{\nabla_x L F(x)}_{\id_n\otimes g(x)}\, dx .$$
Next, we need to understand the commutation between $\nabla$ and $L$. We have, for $j=1, \ldots , n$
$$\partial_{x_j} L F = \Delta \partial_{x_j} F  + \sum_{k=1}^n\partial_{x_j}(g^{-1} \partial_{x_k} g) \partial_{x_k} F + \sum_{k=1}^n (g^{-1} \partial_{x_k} g) \partial^2_{x_j x_k} F =L\partial_{x_j} F + \sum_{k=1}^n \theta^g_{j,k} \partial_{x_k} F.$$
So we have, at any fixed $x\in \R^n$, 
$$\sp{\nabla_x F }{\nabla_x L F(x)}_{\id_n\otimes g} =  \sum_{j=1}^n \sp{\partial_{x_j} F}{\partial_{x_j} LF}_g = \sum_{j=1}^n  \sp{\partial_{x_j} F}{L\partial_{x_j}F}_g + \sum_{k,j=1}^n  \sp{\partial_{x_j} F}{\theta_{j,k}^g \partial_{x_k} F}_g,$$  
that is
$$\sp{\nabla_x F }{\nabla_x L F(x)}_{\id_n\otimes g} = \sp{\curv^g \nabla F}{\nabla F}_{\id_n\otimes g} +  \sum_{j=1}^n  \sp{\partial_{x_j} F}{L\partial_{x_j}F}_g .$$
After integration we we find, using again the previous Fact, 
\begin{multline*}
\int_{\R^n} \|LF\|^2_{g} = -\int_{\R^n}  \sp{\curv^g \nabla F}{\nabla F}_{\id_n\otimes g}+ \int_{\R^n} \sum_{j=1}^n \sp{\nabla \partial_{x_j}F}{\nabla \partial_{x_j}F}_{\id_n\otimes g} \\
= -\int_{\R^n}  \sp{\curv^g \nabla F}{\nabla F}_{\id_n\otimes g}+ \int_{\R^n} \sum_{j,k=1}^n \sp{\partial^2_{x_j,x_k}F}{\partial^2_{x_j,x_k}F}_g.
\end{multline*}
\end{proof}

The proof of the Brascamp-Lieb can be done by introducing, for given $F$ a function $\Psi$ such that $F= L\Psi$. By Fact~\ref{ipp}, a necessary condition on $F$ is that $\int_{\R^d} g F=0$ and this will be achieved by considering $F- \mathcal Z^{-1} \int gF$. Proving existence and regularity of $\Psi$ requires some work, and for our purposes it is sufficient to establish a classical approximation procedure by "nice" functions. 

\begin{fact}\label{density}
In $L^2(g)$, the space
$$\mathcal H :=\big\{L\Psi \; ; \ \Psi:\R^n \to \R^d \textrm{ of class $C^2$ and compactly supported}\big\}$$
is dense in 
$$\Big\{F \in L^2(g) \; ; \ \int_{\R^n} g F = 0\Big\}.$$
\end{fact}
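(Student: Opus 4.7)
The plan is to argue by duality: I pick $F\in L^2(g)$ with $\int gF=0$ that is $L^2(g)$-orthogonal to every $L\Psi$ with $\Psi\in C^2_c(\R^n,\R^d)$, and aim to show $F\equiv 0$. Rewriting the orthogonality as $\int \langle L\Psi,gF\rangle\,dx=0$ for every such $\Psi$ and integrating by parts the $\Delta\Psi$ and drift terms in $L$ expresses the hypothesis as a distributional second-order equation for $F$ of the form
$$\Delta F \;+\; 3\sum_{k=1}^n (g^{-1}\partial_{x_k}g)\,\partial_{x_k}F \;+\; 2(g^{-1}\Delta g)\,F \;=\; 0,$$
which, under $g\in C^2$, is a uniformly elliptic system on every compact set with continuous coefficients.

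The next step is to invoke standard interior $L^p$/Schauder theory applied componentwise to this system, upgrading $F$ to belong to $W^{2,p}_{\mathrm{loc}}(\R^n,\R^d)$ for every $p<\infty$; in particular $F$ is $C^1$ and its second distributional derivatives are locally integrable. With this regularity, the symmetry of $L$ in $L^2(g)$ granted by Fact~\ref{ipp} transfers the orthogonality hypothesis into $\int \langle \Psi,LF\rangle_g\,dx=0$ for every $\Psi\in C^2_c$, and I conclude $LF\equiv 0$ almost everywhere on $\R^n$.

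The final step is a Caccioppoli-type cutoff. I pick $\chi_R\in C_c^\infty(\R^n)$ with $\chi_R\equiv 1$ on $B(0,R)$, supported in $B(0,2R)$, and $\|\nabla \chi_R\|_\infty\le C/R$. Applying Fact~\ref{ipp} to the pair $(F,\chi_R^2 F)$ and using $LF\equiv 0$ yields
$$\int_{\R^n}\chi_R^2\,\|\nabla F\|^2_{\id_n\otimes g}\,dx\;=\;-2\int_{\R^n}\chi_R\sum_{k=1}^n(\partial_{x_k}\chi_R)\,\langle \partial_{x_k}F,F\rangle_g\,dx.$$
By Cauchy--Schwarz this self-bounds into $\int \chi_R^2\|\nabla F\|^2_{\id_n\otimes g}\le 4\|\nabla\chi_R\|_\infty^2\,\|F\|_{L^2(g)}^2$, which tends to $0$ as $R\to\infty$. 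Hence $\nabla F\equiv 0$, so $F$ is a constant vector $c\in\R^d$, and the zero-mean condition $\mathcal Z c=0$ combined with the invertibility of $\mathcal Z\in\matrd^+$ forces $c=0$.

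The main obstacle is the regularity step: the equation satisfied by $F$ is not in divergence form and its zeroth-order coefficient is only continuous, so one has to be careful to use only the smoothness that $g\in C^2$ actually buys. Interior $L^p$ estimates are enough; alternatively one may mollify $g$, work on the smoothed problem, and pass to the limit. Once $F$ is classical, the rest is the standard pattern showing that a nonpositive symmetric operator on a weighted $L^2$ space has dense range in the orthogonal complement of its kernel of constants.
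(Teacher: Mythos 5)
Your argument is correct and follows essentially the same route as the paper's: duality, interior elliptic regularity to make $F$ an honest $W^{2,2}_{\mathrm{loc}}$ solution of $LF=0$, and a Caccioppoli-type cutoff forcing $\nabla F\equiv 0$, hence $F=c$ with $\mathcal Z c=0$. The only cosmetic differences are that the paper first conjugates by $\sqrt g$ so as to run the elliptic regularity on unweighted $L^2$, and that your displayed distributional equation should in fact reduce to $gLF=0$ (the extra lower-order terms trace back to the sign discrepancy between the paper's two formulas for $L$); neither point affects the argument, since the principal part is $\Delta$ in any case.
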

\begin{proof}
The proof is an adaptation of a classical argument (recalled for instance in~\cite{CFM}).
Let $L^2$ be the standard space of square-integrable functions from $\R^n$ to $\R^d$ equipped the standard Euclidean structure $\sp{}{}$.  The space $L^2(g)$ is isometric to $L^2$ through the isometry
$U : L^2(g) \to L^2$ given by
$$U F = \sqrt{g} F ,$$
since
$$\intrn \|UF\|^2 \, dx \intrn \sp{UF}{UF} \, dx = \intrn  \sp{g F}{F} \, dx = \intrn \sp{F}{F}_g \, dx =\intrn \|F\|_g^2 \, dx .$$
The inverse of $U$ is its adjoint, given by $U^\ast H = \sqrt{g}^{-1} H$. The corresponding linear differential operator on $L^2$,
$$L_0 = U L U^\ast$$
 is of the form, for $H:\R^n\to\R^d$,
$$L_0 H = \Delta H - B\nabla H - c H ,$$
where $B: \R^n \to \mathcal M_d(\R)$ incorporates derivatives of $g$ and thus is of class $C^1$ and $c:\R^n \to \R$ incorporates second derivatives of $g$ and is thus of class $C^0$. Note the presence of $B$, which does not appear when $d=1$. So we really have a (non-diagonal) system of differential operators. However regularity matches classical regularization properties of the elliptic operator $\Delta$ on vector valued functions. That is, if $H$ is a function of $L^2$ that verifies in the sense of distributions $L_0 H=0$, then $H$ is $H^2_{loc}=W^{2,2}_{loc}$. Note that it is useful  that $B$ is of class $C^1$ and therefore preserves distributions in $H^{-1}$, which will then fell in $H^1_{loc}$ by $\Delta^{-1}$, and by repeating the argument, we arrive to $H^2_{loc}$ as a pre-image by   $\Delta$ of an element of $L^2$. 

So, let $F:\R^n \to \R^d$ be a function in $L^2(g)$ that is orthogonal to $\mathcal H$. Our goal is to prove that $F$ is constant. If we denote $H= U F$, we will have $L_0 H = 0$ in the sense of distributions on $\R^n$. By the previous discussion, $H$, and therefore $F$,   will be in $H^2_{loc}=W^{2,2}_{loc}$.

If $\theta:\R^n \to \R^+$ is a smooth compactly supported function we have, setting 
$\nabla \theta \otimes F = [(\partial_1 \theta) F  , \ldots , (\partial_n \theta)  F ]\in \mathcal M_{d,n}(\R)$, 
\begin{multline*}
\|\nabla (\theta F)\|_{\id_n\otimes g}^2 = 
\|\nabla \theta \otimes F + \theta \nabla F \|^2_{\id_n\otimes g}
= \|\nabla \theta \otimes F\|_{\id_n\otimes g}^2 + 2 \theta  \sp{\nabla \theta \otimes F }{\nabla F}_{\id_n\otimes g} +  \|\nabla F \|^2_{\id_n\otimes g}   \theta^2 
\end{multline*}
But by integration by parts, 
$$ \int \|\nabla F \|^2_{\id_n\otimes g}  \theta^2  = \int\sp{\nabla F }{\theta^2 g\nabla F}_{\id_n\otimes g}=- \int \sp{F}{ LF}_g \theta^2 - 2 \int   \theta \sp{\nabla \theta \otimes F }{\nabla F}_{\id_n\otimes g},$$
so that, since $LF=0$ almost everywhere, 
$$\int \|\nabla (\theta F)\|_{\id_n\otimes g}^2   = \int \|\nabla \theta \otimes F\|_{\id_n\otimes g}^2 = \int \|\nabla \theta\|^2 \, \|F\|_g^2.$$
If $\theta$ is smooth compactly supported function that is equal to $1$ in a neighborhood of zero, we set $\theta_k(x)= \theta(x/k)$, then 
$$\int \|\nabla (\theta F)\|_{\id_n\otimes g}^2   =\int \|\nabla \theta_k\|\,  \|F\|_g^2 \to 0$$
and therefore $\|\nabla F\|_g=0$ a.e. and  $F$ is constant.
\end{proof}

We can now do the proof of Theorem~\ref{HBL}.  For our $F:\R^n \to \R^d$, introduce $H:\R^n \to \R^d$ given by
$$H(x) = F(x) - \mathcal Z^{-1} \int_{\R^n} gF,$$
so that $\int_{\R^n} g(x) H(x) \, dx = 0 $ and $\nabla H = \nabla F$. We write
\begin{multline}\label{start}
 \int \|F(x)- \mathcal Z^{-1}\mbox{$\int$}g F \|^2_{g(x) }\, dx \\ = \intrn \|H\|^2_g 
 = 2\intrn \sp{H}{L\Psi}_g - \intrn \|L\Psi\|_g^2 +\intrn\|H-L\Psi\|_g^2 
 \end{multline}
for any given $\Psi$ of class $C^2$ and compactly supported. We will concentrate on the two first term of the last expression, which would have been the only one to appear if we could have chosen directly $L\Psi = H$; we have an extra-term, which can be chosen arbitrary small. 

Using Fact~\ref{ipp} and Fact~\ref{gamma2} for $\Psi$, we can write
\begin{multline*}
2\intrn \sp{H}{L\Psi}_g - \intrn \|L\Psi\|_g^2 \\
= -2 \intrn\sp{\nabla_x H }{\nabla_X \Psi}_{\id_n\otimes g(x)} \, dx 
- \int_{\R^n} \sp{\curv^g_x \nabla_x \Psi(x) }{\nabla_x \Psi(x)}_{\id_n\otimes g(x)} \, dx - \int_{\R^n}\sum_{j,k=1}^n \|\partial^2_{x_j x_k}\Psi\|_{g(x)}^2\, dx. \\
\le  -2 \intrn\sp{\nabla_x H }{\nabla_X \Psi}_{g(x)}\, dx 
- \int_{\R^n} \sp{\curv^g_x \nabla_x \Psi(x) }{\nabla_x \Psi(x)}_{\id_n\otimes g(x)} \, dx  \\
\le \intrn Q_{\id_n\otimes g(x), \curv^g(x)}^\circ (\nabla_x H) \, dx =  \intrn Q_{\id_n\otimes g(x), \curv^g(x)}^\circ  (\nabla_x F) \, dx.
.
\end{multline*}
where we used the characterization of the $g(x)$-polar form in terms of Legendre's transform~\eqref{leg}. So from~\eqref{start} we get, for any function $F$ and any $C^2$ compactly supported function $\Psi$,
$$ \int \|F(x)- \mathcal Z^{-1}\mbox{$\int$}g F \|^2_{g(x) }\, dx \le 
 \intrn Q_{\id_n\otimes g(x), \curv^g(x)}^\circ (\nabla_x F) \, dx
 +\intrn\|H  -L\Psi\|_g^2 $$
Taking the infimum over $\Psi$, we conclude thanks to Fact~\ref{density}. 
\end{proof}

\section{Proof of the matrix valued Pr\'ekopa's inequality}

Besides the matrix valued Brascamp-Lieb inequality that we have established above, the proof of Pr\'ekopa's inequality relies on the following property that clarifies the r\^ole of  \NN-log-concavity.

\begin{prop}\label{MA}
Let $g:\R^{n_0+n_1}\to \matrd^+$ be a $C^2$ function with values in the symmetric positive matrices and assume that $g$ is \NN-log-concave. At any given $x\in \R^{n_0+n_1}$, divide the operator $\curv^g$ in blocks: 
$$\curv^g_{0,0} :=[\theta^g_{j,k}(x)]_{1\le j,k\le n_0} \in L(\R^{n_0}\otimes \R^d)\simeq  L(\mathcal M_{d,n_0}(\R)) $$
and 
$$\curv^g_{1,1} :=[\theta^g_{i,j}(x)]_{n_0< j,k\le n_0+n_1}\in L(\R^{n_1}\otimes \R^d)\simeq  L(\mathcal M_{d,n_1}(\R)),$$
which are defined according to~\eqref{curvdef1}-\eqref{curvdef2},
together with the mixed-derivatives operator 
$$\curv^g_{0,1}:=\Big[ \theta^g_{j,k}(x)\Big]_{1\le j\le n_0, n_0<k\le n_0+n_1} \in L(\mathcal M_{d,n_0}, \mathcal M_{d,n_1}) $$
whose action on $V_0 =[v_1, \ldots, v_{n_0}]\in \mathcal M_{d,n_0}$  is,  as expected, given by 
 $$\curv^g_{0,1} V_0 := \Big[\sum_{j=1}^{n_0} \theta^g_{j,n_0+k}v_j \Big]_{k=1, \ldots, n_1}\in \mathcal M_{d, n_1} $$ 

Then, for any given $V_0 \in \mathcal M_{n_0, d}$ and $x\in \R^{n_0+n_1}$, if we denote by $Q^\circ_{1,1}$ the $\sp{\cdot}{\cdot}_{\id_{n_1}\otimes g}$-polar of the quadratic form $W\to -\sp{\curv^g_{1,1}W}{W}_{\id_{n_1}\otimes g}$ associated to $-\curv^g_{1,1}$, we have
$$ \sp{-\curv^g_{0,0} V_0}{V_0}_{\id_{n_0}\otimes g}  \ge Q^\circ_{1,1} (\curv^g_{0,1} V_0  ) .
$$
In case $\curv^g_{1,1}$ is invertible at $x$, this is equivalent to 
$$ \sp{-\curv^g_{0,0} V_0}{V_0}_{\id_{n_0}\otimes g}  \ge \sp{(-\curv^g_{1,1})^{-1}\curv^g_{0,1} V_0}{\curv^g_{0,1} V_0}_{\id_{n_1}\otimes g}.$$
\end{prop}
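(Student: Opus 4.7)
The proposition is a Schur-complement-type positivity statement for the symmetric nonpositive operator $\curv^g$ acting on $(\mathcal M_{d,n_0+n_1}(\R),\sp{\cdot}{\cdot}_{\id_{n_0+n_1}\otimes g})$, written in the block decomposition corresponding to the orthogonal splitting
$$\mathcal M_{d,n_0+n_1}(\R)\simeq \mathcal M_{d,n_0}(\R)\oplus \mathcal M_{d,n_1}(\R)$$
by the first $n_0$ versus the last $n_1$ columns. This splitting is orthogonal for $\sp{\cdot}{\cdot}_{\id_{n_0+n_1}\otimes g}$, so no unwanted mixed terms from the metric arise.

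First I would expand the quadratic form $\sp{\curv^g V}{V}_{\id_{n_0+n_1}\otimes g}$ at $V=(V_0,V_1)$ starting from formula~\eqref{curvdef2}, splitting the double sum $\sum_{j,k=1}^{n_0+n_1}\sp{\theta^g_{j,k}v_j}{v_k}_g$ into the four pieces according to whether each index is $\le n_0$ or $>n_0$. The two diagonal pieces are exactly $\sp{\curv^g_{0,0}V_0}{V_0}_{\id_{n_0}\otimes g}$ and $\sp{\curv^g_{1,1}V_1}{V_1}_{\id_{n_1}\otimes g}$. The two off-diagonal pieces are identified with one another via the $g$-symmetry $(\theta^g_{j,k})^{\ast_g}=\theta^g_{k,j}$ from~\eqref{sym}, each being equal to $\sp{\curv^g_{0,1}V_0}{V_1}_{\id_{n_1}\otimes g}$. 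Altogether
$$\sp{-\curv^g V}{V}_{\id_{n_0+n_1}\otimes g}=\sp{-\curv^g_{0,0}V_0}{V_0}_{\id_{n_0}\otimes g}-2\sp{\curv^g_{0,1}V_0}{V_1}_{\id_{n_1}\otimes g}+\sp{-\curv^g_{1,1}V_1}{V_1}_{\id_{n_1}\otimes g}.$$

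Now \NN-log-concavity says exactly that the left-hand side is $\ge 0$ for every choice of $V_0$ and $V_1$. Fixing $V_0$, rearranging, and taking the supremum over $V_1\in \mathcal M_{d,n_1}(\R)$ yields
$$\sp{-\curv^g_{0,0}V_0}{V_0}_{\id_{n_0}\otimes g}\ \ge\ \sup_{V_1}\Big\{2\sp{\curv^g_{0,1}V_0}{V_1}_{\id_{n_1}\otimes g}-\sp{-\curv^g_{1,1}V_1}{V_1}_{\id_{n_1}\otimes g}\Big\}.$$
By the Legendre characterization~\eqref{leg} (rewritten without the factor $1/2$ after doubling both sides) of the $\sp{\cdot}{\cdot}_{\id_{n_1}\otimes g}$-polar of the quadratic form $W\mapsto \sp{-\curv^g_{1,1}W}{W}_{\id_{n_1}\otimes g}$, the right-hand side is precisely $Q^\circ_{1,1}(\curv^g_{0,1}V_0)$, which is the first claimed inequality. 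The second, equivalent, form in the statement is then just the explicit evaluation $Q^\circ_{g,C}=\sp{C^{-1}\cdot}{\cdot}_g$ when $-\curv^g_{1,1}$ is (positive) invertible, as recalled in Section~2.

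No serious obstacle is expected: the argument is a direct adaptation of the classical Schur-complement inequality to the $\id\otimes g$-weighted setting. The only points requiring care are the bookkeeping of the factor~$2$ in the cross-term and the systematic use of the two different inner products $\sp{\cdot}{\cdot}_{\id_{n_0}\otimes g}$ and $\sp{\cdot}{\cdot}_{\id_{n_1}\otimes g}$ on the two blocks, together with the adjoint relation between the off-diagonal block $\curv^g_{0,1}$ and its transpose coming from~\eqref{sym}.
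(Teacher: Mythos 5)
Your proof is correct and follows essentially the same route as the paper's: test the nonpositivity of $\curv^g$ on block matrices $[V_0,W]$, use the $g$-symmetry \eqref{sym} to collect the cross-term with its factor of $2$, and take the supremum over $W$, identifying it with $Q^\circ_{1,1}(\curv^g_{0,1}V_0)$ via the Legendre characterization \eqref{leg}. The bookkeeping of the factor $2$ and the polar form is handled exactly as in the paper.
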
 

As discussed in the introduction, this extension of~\eqref{convexity} is very much related to the homogeneous Monge-Amp\`ere equation. 

\begin{proof}
For fixed $V_0 \in  \mathcal M_{n_0, d}$ and $x\in \R^{n_0+n_1}$, we will test the positivity of the operator $-\curv^g$ on the matrix $[V_0, W]\in \mathcal  \mathcal M_{n_0+n_1, d}$ for an arbitrary matrix $W\in  \mathcal M_{n_1, d}$. The optimal choice would be $W=(-\curv^g_{1,1})^{-1}\curv^g_{0,1} V_0$, but we want to proceed without assuming the invertibility of $\curv^g_{1,1}$. 

Using the notation of the Proposition in the formula~\eqref{curvdef2} and the symmetry property~\eqref{sym}, we have
$$\sp{\curv^g([V_0, W])}{[V_0,W]}_{\id_{n_0+n_1}\otimes g} = \sp{ \curv^g_{0,0} V_0}{V_0}_{\id_{n_0}\otimes g} 
+2\sp{\curv^g_{0,1} V_0}{W}_{\id_{n_1}\otimes g}  + \sp{\curv^g_{1,1} W}{W}_{\id_{n_1}\otimes g}.$$
Since $\sp{\curv^g([V_0, W])}{[V_0,W]}_{\id_{n_0+n_1}\otimes g} \le 0$, by strong log-concavity, we have
$$\sp{ -\curv^g_{0,0} V_0}{V_0}_{\id_{n_0}\otimes g}\ge 2\Sp{\curv^g_{0,1} V_0}{W}_{\id_{n_1}\otimes g} - \sp{-\curv^g_{1,1} W}{W}_{\id_{n_1}\otimes g} .$$
Taking the supremum over $W$ gives the desired inequality. 
\end{proof}

We have now all the ingredients to prove the Pr\'ekopa-Raufi inequality. With the notation of Theorem~\ref{prek}, we have to estimate $\sp{\curv^\alpha V_0}{V_0}_{\id_{n_0}\otimes\alpha}$ at some fixed $x\in \R^{n_0}$ for $V_0=[u_1, \ldots , u_{n_0}]\in\mathcal M_{d,n}(\R)$. 

\begin{fact}[Computing $\curv^\alpha$]\label{comp}
\label{second} With the notation of Theorem~\ref{prek}, 
at any $x\in \R^{n_0+n_1}$  and fixed $V_0=[v_1, \ldots , v_{n_0}]\in\mathcal M_{d,n_0}(\R)$ we introduce
\beq\label{defF}
F:= \sum_{j=1}^{n_0} (g^{-1} \partial_{x_j} g ) v_j \in \R^d.
\eeq
Then, at a fixed $t\in \R^{n_0}$, we have, with the notation $\curv^g_{0,0} $ taken from Proposition~\ref{MA}, 
\begin{eqnarray*}
\lefteqn{\sp{\curv^\alpha V_0}{V_0}_{\id_{n_0}\otimes\alpha}}\\
&= &\int_{\R^{n_1}} \sp{\curv^g_{0,0} V_0}{V_0}_{\id_{n_0}\otimes g(t,y)} \, dy  \\
& & \qquad +
\int_{\R^{n_1}} \sp{F}{F}_{g(t,y)}\, dy - \sp{\alpha^{-1}\mbox{$\int_{\R^{n_1}}$}
gF \, dy}{\alpha^{-1} \mbox{$\int_{\R^{n_1}}$}
gF \, dy }_{\alpha(x)} \\
&= &\int_{\R^{n_1}} \sp{\curv^g_{0,0}(t,y) V_0}{V_0}_{\id_{n_0}\otimes g(t,y)} \, dy +
\int_{\R^{n_1}} \Big\| F(t,y) - \alpha(y)^{-1} \mbox{$\int_{\R^{n_1}}$}
g(t,z) F(x,z)\, dz   \Big\|_{g(t,y)}^2\, dy .
\end{eqnarray*}
\end{fact}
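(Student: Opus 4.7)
The strategy is to compute $\sp{\curv^\alpha V_0}{V_0}_{\id_{n_0}\otimes\alpha}$ by pure calculation, starting from the defining identity~\eqref{basicf2} applied to $\alpha$ at a fixed $t$. The idea is that because differentiation in $t$ commutes with integration in $y$, each term admits an explicit integral representation in terms of $g$ and its derivatives, and it then remains to reorganize these integrals so as to make $\curv^g_{0,0}$ and the vector $F$ appear.

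First, since $\alpha(t) = \int_{\R^{n_1}} g(t,y)\, dy$, dominated convergence (justified by the hypotheses of Theorem~\ref{prek}) gives $\partial_{t_j}\alpha(t) = \int \partial_{t_j} g(t,y)\, dy$ and $\partial^2_{t_k t_j}\alpha(t) = \int \partial^2_{t_k t_j} g(t,y)\, dy$. Applying~\eqref{basicf2} directly to $\alpha$ yields
\beqno
\sp{\curv^\alpha V_0}{V_0}_{\id_{n_0}\otimes\alpha} = \sum_{j,k=1}^{n_0}\Big[\int \sp{(\partial^2_{t_k t_j} g) v_j}{v_k}\, dy - \sp{\alpha^{-1}(\partial_{t_j}\alpha) v_j}{\alpha^{-1}(\partial_{t_k}\alpha) v_k}_\alpha\Big].
\eeqno
The second piece is immediately recognized as $\sp{\alpha^{-1}\int g F}{\alpha^{-1}\int g F}_\alpha$ after noting that, with $F$ defined by~\eqref{defF}, one has $g(t,y) F(t,y) = \sum_j \partial_{t_j}g(t,y)\, v_j$, hence $\int gF\, dy = \sum_j (\partial_{t_j}\alpha)\, v_j$.

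Next, to handle the first (pointwise second-derivative) piece, apply~\eqref{basicf2} in the other direction, this time to $g$ restricted to the indices $1\le j,k\le n_0$. This gives, at every $(t,y)$,
\beqno
\sum_{j,k=1}^{n_0}\sp{(\partial^2_{t_k t_j} g) v_j}{v_k} = \sp{\curv^g_{0,0} V_0}{V_0}_{\id_{n_0}\otimes g} + \sum_{j,k=1}^{n_0}\sp{g^{-1}(\partial_{t_j} g) v_j}{g^{-1}(\partial_{t_k} g) v_k}_g.
\eeqno
The double sum on the right is exactly $\sp{F}{F}_g$ by bilinearity. Integrating in $y$ and combining with the previous display yields the first claimed identity:
\beqno
\sp{\curv^\alpha V_0}{V_0}_{\id_{n_0}\otimes\alpha} = \int \sp{\curv^g_{0,0} V_0}{V_0}_{\id_{n_0}\otimes g}\, dy + \int \|F\|_g^2\, dy - \sp{\alpha^{-1}\mbox{$\int$} gF}{\alpha^{-1}\mbox{$\int$} gF}_\alpha.
\eeqno

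Finally, the second form of the identity is just a rewriting of the last two terms as a single variance. Expanding the square as in the display immediately preceding the ``$\matrd$-operator form'' remark (with $g$ in place of the weight and $\alpha$ in place of $\mathcal Z$), one obtains
\beqno
\int \Big\| F - \alpha^{-1}\mbox{$\int$} gF\,\Big\|_g^2\, dy = \int \|F\|_g^2\, dy - \sp{\alpha^{-1}\mbox{$\int$} gF}{\alpha^{-1}\mbox{$\int$} gF}_\alpha,
\eeqno
which substituted above gives the second claimed formula. There is no real obstacle here beyond bookkeeping: the only care needed is in matching the ``cross term'' produced by~\eqref{basicf2} against the bilinear structure of $F$, and in using the symmetry $\partial_{t_j}\alpha = \int \partial_{t_j} g$ to convert $\alpha$-moments into $g$-integrals of $F$.
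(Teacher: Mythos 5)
Your proposal is correct and follows essentially the same route as the paper: differentiate $\alpha$ under the integral sign, apply the defining identity \eqref{basicf2} to $\alpha$ and then to $g$ on the block of indices $1\le j,k\le n_0$ so that the correction terms sum to $\|F\|_g^2$, identify $\int gF = \sum_j(\partial_{t_j}\alpha)v_j$, and conclude with the variance expansion recorded after Theorem~\ref{HBL}. The only cosmetic difference is that the paper carries out the computation index by index before summing, whereas you work with the summed bilinear form throughout.
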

We see that this computation is the generalization of~\eqref{second1}. 
\begin{proof}
We have for $j,k=1, \ldots, n_0$, 
\begin{eqnarray*}
\sp{\theta_{j,k}^\alpha(t) v_j}{v_k}_{\alpha(t)} &=& \sp{\partial^2_{j,k} \alpha(t) v_j}{v_k} - \sp{\partial_j \alpha(t) v_j}{\alpha(t)^{-1}\partial_k \alpha(t) v_k}\\
&=& \int_{\R^{n_1}}  \sp{\partial^2_{j,k} g(t,y) v_j}{v_k} \, dy
-  \sp{\int_{\R^{n_1}} \partial_j g(t,y) v_j \, dy}{\alpha(t)^{-1} \int_{\R^{n_1}} \partial_k g(t,y) v_k\, dy }\\
&=& \int _{\R^{n_1}} \sp{\theta_{j,k}^g (t,y) v_j }{v_k}_{g(t,y)} \, dy \\
& & \quad  + \int_{\R^{n_1}}  \sp{g(t,y)^{-1} \partial_j g(t,y) v_j}{g(t,y)^{-1} \partial_k g(t,y)v_k}_{g(t,y)} \, dy  \\
&  & \qquad -  \sp{\int_{\R^{n_1}} \partial_j g(t,y) v_j \, dy}{\alpha(t)^{-1} \int_{\R^{n_1}} \partial_k g(t,y) v_k\, dy }
\end{eqnarray*}
Summing over all $j,k\in\{1, \ldots, n_0\}$ we find
$$\sp{\curv^\alpha V_0}{V_0}_{\id_{n_0}\otimes \alpha} = \int_{\R^{n_1}} \sp{\curv^g_{0,0} V_0}{V_0}_{\id_{n_0}\otimes g(t,y)} \, dy
+\int_{\R^{n_1}} \sp{F}{F}_g \, dy - \sp{\int_{\R^{n_1}}g F \, dy }{\alpha^{-1}\int_{\R^{n_1}} g F \, dy}$$
For the equality between the expressions, see the computations after Theorem~\ref{HBL}.
\end{proof}

We now combine the previous results. The strong log-concavity, in the form given by Proposition~\ref{MA} combines with Fact~\ref{second} and implies that, at our fixed $t\in \R^{n_0}$ and $V_0=[v_1, \ldots ,v_{n_0}]\in \mathcal M_{d,{n_0}}(\R)$ we have
\begin{multline*}
\sp{-\curv^\alpha V_0}{V_0}_{\id_{n_0}\otimes \alpha}   \ge \\
 \int_{\R^{n_1}} Q_{1,1}^\circ (\curv^g_{0,1} V_0) \, dy 
  -
\int_{\R^{n_1}} \Big\| F(t,y) - \alpha^{-1}(t) \mbox{$\int_{\R^{n_1}}$}
g(t,z) F(t,z)\, dz   \Big\|_{g(t,y)}^2\, dy ,
\end{multline*}
where $Q^\circ_{1,1}$ at $(t,y)$ is the $\sp{\cdot}{\cdot}_{\id_{n_1}\otimes g(t,y)}$-polar of the quadratic form $W\to -\sp{\curv^g_{1,1}W}{W}_{\id_{n_1}\otimes g}$ associated to $-\curv^g_{1,1}$ at $(t,y)$. 
The function 
$$y \to F(t,y)$$
that we denote by $F$ for convenience, given by~\eqref{defF} at $x=(t,y)$ with $t$ fixed,  is such that
$$\nabla_y F=\Big[ \sum_{j=1}^{n_0} \partial_{y_1}\big((g^{-1} \partial_{x_j} g )\big) v_j, \ldots 
\sum_{j=1}^{n_0} \partial_{y_{n_1}}\big((g^{-1} \partial_{x_j} g )\big) v_j\Big]\in \mathcal M_{d,n_1}(\R)$$
and so, with the notation of Proposition~\ref{MA}, we find 
$$\nabla_y F =  \curv^g_{0,1}V_0.$$
Writing the previous inequality in terms of $F$ we arrive to
$$
\sp{-\curv^\alpha V_0}{V_0}_{\id_{n_0}\otimes \alpha} \ge   \\
 \int_{\R^{n_1}} \sp{(-\curv^g_{1,1})^{-1}\nabla_y F}{\nabla_y F}_{\id_{n_1}\otimes g} \, dy 
  -
\int_{\R^{n_1}} \Big\|  F - \alpha^{-1} \mbox{$\int_{\R^{n_1}}$}
 g F   \Big\|_{g}^2\, dy .
$$
In some sense the previous inequality is the local form Pr\'ekopa's inequality. The conclusion now follows from the Brascamp-Lieb inequality of Theorem~\ref{HBL} on $\R^{n_1}$ applied to the log-concave weight  $y\to \tilde g(y) := g(t,y)\in \matrd^+$ (for which $\curv^{\tilde g}$ at $y$ is at $\curv^g_{1,1}$ at $(t,y)$) and to the function $y\to F(t,y)$.

\medskip

\end{document}